\documentclass[11pt,a4paper]{amsart}
\usepackage{bm}
\usepackage{amsmath}
\usepackage{amsfonts}
\usepackage{amssymb}
\usepackage{amsthm}
\usepackage{graphicx}
\usepackage{color}
\usepackage{url}
\usepackage{a4wide}
\usepackage[numbers,square,sort&compress]{natbib}
\theoremstyle{plain}

\newtheorem{theorem}{Theorem}[section]
\newtheorem{corollary}[theorem]{Corollary}
\newtheorem{definition}[theorem]{Definition}

\newtheorem{lemma}[theorem]{Lemma}

\newtheorem{proposition}[theorem]{Proposition}
\newtheorem{remark}[theorem]{Remark}

\numberwithin{equation}{section}
\newcommand{\tr}{\mathop{\mathrm{Tr}}}

\newcommand{\bbr}{\mathbb{R}}

%\allowdisplaybreaks[4]
\begin{document}

\title{On strong solutions for positive definite jump diffusions}
%Titel abgeaendert, affine rausgenommen
\author[E.~Mayerhofer]{Eberhard Mayerhofer}
\address{Vienna Institute of Finance, University of Vienna and Vienna University of Economics and Business Administration, Heiligenst\"adterstrasse 46-48, 1190 Vienna, Austria}
\email{eberhard.mayerhofer@vif.ac.at}
\author[O. Pfaffel]{Oliver Pfaffel}
\address{TUM Institute for Advanced Study \& Zentrum Mathematik, Technische Universit\"at M\"unchen, Boltzmannstra\ss e 3, D-85747 Garching bei M\"unchen, Germany}
\email{pfaffel@ma.tum.de}
\author[R. Stelzer]{Robert Stelzer}
\address{Institute of Mathematical Finance, Ulm University, Helmholtzstra\ss e 18, D-89081 Ulm, Germany}
\email{robert.stelzer@uni-ulm.de}

\thanks{The authors thank the anonymous referees for most helpful comments on a previous version of the present paper, in particular, for the suggestion of a shorter proof for Proposition \ref{lem: MCKean}.\\
E.M. gratefully acknowledges financial support from WWTF (Vienna Science and Technology Fund), O.P. and R.S. the one from Technische Universit\"at M\"unchen - Institute for Advanced Study funded by the German Excellence Initiative and O.P. additionally the one from the International Graduate School of Science and Engineering (IGSEE)}
\date{}
\begin{abstract}
We show the existence of unique global strong solutions of a class of stochastic differential equations on
the cone of symmetric positive definite matrices. Our result includes affine diffusion processes and therefore extends considerably the known statements concerning Wishart processes, which have recently been extensively employed in financial mathematics.

Moreover, we  consider   stochastic differential equations where the diffusion coefficient is given by the $\alpha$-th positive semidefinite power of the process itself with $0.5<\alpha<1$ and obtain existence conditions for them. In the case of a diffusion coefficient which is linear in the process we likewise get a positive definite analogue of the univariate GARCH diffusions.
\end{abstract}
\subjclass[2000]{60G51, 60H10, 60J60, 60J75}
\keywords{affine diffusions, jump diffusion processes on positive definite matrices, local martingales on stochastic intervals, matrix subordinators, stochastic differential equations on open sets, strong solutions, Wishart processes}
\maketitle

\section{Introduction}
A result of the general theory for affine Markov processes on
the cone $S_d^+$ of symmetric positive semidefinite matrices developed in \cite{CFMT} is that for a $d\times d$ matrix-valued standard Brownian motion $B$,  $d\times d$ matrices $Q$ and $\beta$, a symmetric constant drift $b$,  and a positive linear drift $\Gamma: S_d^+\rightarrow S_d^+$, {\it weak global solutions} exist to the stochastic differential equation (SDE)
\begin{align}\label{eq: affine process}
dX_t&=\sqrt{X_t}dB_tQ+Q^\top dB_t^\top\sqrt{X_t}+ (X_t \beta+\beta^\top
X_t+\Gamma(X_t)+b)dt,\\
X_0&=x\in S_d^{+},\nonumber
\end{align}
whenever $b- (d-1)Q^\top Q\in S_d^+$. Above $\sqrt{X}$ denotes the unique positive semidefinite square root of a matrix $X\in{S}_d^+$. For $\Gamma=0$ solutions to the SDE \eqref{eq: affine process} are called Wishart processes and their existence has been considered in detail in  the
fundamental paper by Marie-France Bru \cite{bru}. Further probabilistic investigations on properties of Wishart processes have been carried out  in
 \cite{donatimartin2008,donatimartin,GraczykVostrikova2007}, for instance, and references therein.

In the present paper, we focus on the existence of global {\it strong solutions} of \eqref{eq: affine process} and generalisations of it including jumps and more general diffusion coefficients. Because of the non-Lipschitz diffusion at the boundary of the cone, this problem is a quite delicate one -- a-priori
it is only clear that a unique local solution of \eqref{eq: affine process} exists until $X_t$ hits the boundary of $S_d^+$, since the SDE is locally Lipschitz in the interior of $S_d^+$. Furthermore, known results for pathwise uniqueness, for instance, that of the seminal paper of Yamada and Watanabe \cite[Corollary 3]{Yamada}, are essentially one-dimensional, and therefore do not apply. Hence, the present setting seems to be more complicated than, for instance,  the canonical affine  one (concerning diffusions on $\mathbb R_+^m\times\mathbb R^n$, \cite[Lemma 8.2]{ADPTA}).

Positive semidefinite matrix valued processes are increasingly used in finance, particularly for stochastic modelling
of multivariate  stochastic volatility phenomena in equity and fixed income models, see \cite{bur_cie_tro_07, buraschiporchiatrojani, fonsecagrasselliielpo1, fonsecagrasselliielpo2, fonsecaetal2,fonsecaetal1, gourierouxsufana, gou_suf_04,  grasselli, pigorschstelzer2}. See also \cite{CFMT} and the references therein. Most papers mentioned  use Bru's class of Wishart diffusions, as this results in multivariate analogues of the popular Heston stochastic volatility model and its extensions, Ornstein-Uhlenbeck type processes (\cite{ pigorschstelzer2}) giving a multivariate generalisation of the popular model of \cite{Barndorffetal2001c} or a combination of both (\cite{LeippoldTrojani2008}). This motivated  the research of \cite{CFMT} on positive semidefinite affine processes  including all the aforementioned models and generalising the results of \cite{dfs}, which covered all of these models in the univariate setting. Appropriate multivariate models are especially important for issues like portfolio optimisation, portfolio risk management and the pricing of options depending on several underlyings, which are
 heavily influenced by the dependence structure.

Clearly $S_d^+$-valued processes  model the  covariances, not the correlations, which are, however, preferable when interpreting the dependence structure. The results of the present paper are particularly  relevant, when one wants to derive correlation dynamics (see e.g., \cite{bur_cie_tro_07, buraschiporchiatrojani}), because one needs to assume boundary non-attainment conditions for a rigorous derivation.

The name ``Wishart process'' is, unfortunately, not always used in the same way in the literature. We follow the above cited applied papers in finance and call any solution to \eqref{eq: affine process} with $\Gamma=0$ ``Wishart process'' whereas in most of the previous probabilistic literature ``Wishart process'' also means $\beta=0$ and the ``Wishart processes with drift'' of \cite{donatimartin} are not even special cases of our ``Wishart processes''. For $\Gamma=\beta=0$ and $b=nQ^TQ$ with $n\in\mathbb{N}$ one may also speak of a ``squared Ornstein-Uhlenbeck process''. In the univariate case the name ``Wishart process'' is not used, instead one typically uses ``Cox-Ingersoll-Ross process'' in the financial and ``squared Bessel process'' in the  probability literature.

However, in this paper we do not limit ourselves to the analysis of \eqref{eq: affine process}. Instead, as a special case of a considerably more general result, we consider a similar SDE allowing for a general (not necessarily linear) drift $\Gamma$ and an additional jump part of finite variation. This implies that many L\'evy-driven SDEs on $S_d^+$ like the positive semidefinite Ornstein-Uhlenbeck (OU) type processes (see \cite{barndorffstelzer,PigorschetStelzer2007b}) or the volatility process of a multivariate COGARCH process (see \cite{Stelzer2008mcg}), where the existence of global strong solutions has previously been shown by path-wise arguments, are special cases of our setting. Thus our results allow to consider certain ``jump diffusions'' (in the sense of \cite{Contetal2004}) , viz. mixtures of such jump processes and Wishart diffusions, in applications.

It should be noted that \cite{bru} also contains results on strong solutions for Wishart processes (see
our upcoming
 Proposition \ref{th: bru} and Remark \ref{remark on bru}),
however, they are derived under strong parametric restrictions, because her method
requires an application of Girsanov's theorem. The latter is based on a martingale criterion, which in the matrix valued setting seems hard to verify. Also, the general result (with a non-vanishing linear drift) only holds until the first time when two of the eigenvalues of the process collide. Our approach generalises her method of proof for the case $\beta=0$ (vanishing linear drift) and avoids change of measure techniques.

The most general result of our paper, Theorem \ref{th: strong solutions general}, also opens the way to use positive semidefinite extensions of the univariate GARCH diffusions of \cite{Nelson1990} or of so-called generalised Cox-Ingersoll-Ross models (cf. e.g. \cite{BorkovecKlueppelberg1998,FasenKlueppelbergLindner2006}), where the square root in the diffusion part of \eqref{eq: affine process} is replaced by the $\alpha$-th positive semidefinite power with $\alpha\in[1/2,1]$, in applications (see Corollary \ref{th:gcir}).

The remainder of the paper is structured as follows. In the subsequent section we summarise some notation and preliminaries. In Section \ref{sec:results} we state our main result, Theorem \ref{th: strong solutions general}, and its corollaries applying to Wishart processes, matrix-variate generalised Cox-Ingersoll-Ross and GARCH diffusions. Moreover, we compare our results to the work of Bru which is recalled in Proposition \ref{th: bru}. In the following section we gradually develop the proof of our result.
Our method relies on a generalisation of the so-called {\it McKean's argument}, but avoids the use of Girsanov's theorem. In Section \ref{section: prelim} we thus provide a self-contained proof of a generalisation of {\it McKean's argument} and then deliver the proof of Theorem \ref{th: strong solutions general}  in Section \ref{sec: proof}. We conclude the paper with some final remarks in Section \ref{sec: final}.

\section{Notation and general set-up}
We assume given an appropriate filtered probability space $(\Omega,\mathcal{F},\mathbb{P},\allowbreak(\mathcal{F}_t)_{t\in\mathbb R_+})$ satisfying the usual hypotheses (complete and right-continuous filtration) and rich enough to support all processes occurring. For short, we sometimes write just $\Omega$ when actually referring to this filtered probability space. $B$ is a $d\times d$ standard Brownian motion on $\Omega$ and
 $d\in \mathbb{N}$ always denotes the dimension. Furthermore, we use the following notation, definitions and setting:
\begin{itemize}
\item $\mathbb R_+:=[0,\infty)$, $M_d$ is the set of real valued $d\times d$
matrices and $I_d$ is the identity matrix.
\item $S_d\subset M_d$ is the space of symmetric matrices, and
$S_d^+\subset S_d$ is the cone of symmetric positive semidefinite
matrices in $S_d$ and  $S_d^{++}$ its interior, i.e. the positive definite matrices. The partial order on $S_d$ induced by the cone is
denoted by $\preceq$, and $x\succ 0$, if and only if $x\in S_d^{++}$. We endow $S_d$ with the scalar product $\langle x,y\rangle:=\tr(xy)$, where $\tr(A)$
denotes the trace of $A\in M_d$. $\|\,\cdot\,\|$ denotes the associated norm, and $d(x,\partial S_d^+)=\inf_{y\in\partial S_d^+}\|x-y\|$ is the distance of $x\in S_d^+$ to the boundary $\partial S_d^+$.
\item The usual tensor (Kronecker) product of two matrices $A,B$ is denoted by $A\otimes B$ and the vectorisation operator mapping $M_d$ to $\mathbb{R}^{d^2}$ by stacking the columns of a matrix $A$ below each other is denoted by $\operatorname{vec}(A)$ (see \cite[Chapter 4]{horn} for more details).
\item A function $f:S_d^{++}\to U$ with $U$ being (a subset of) a normed space is called \emph{locally Lipschitz} if $\|f(x)-f(y)\|\leq K(C)\|x-y\| \,\forall\, x,y\in C$ for all compacts $C\subset U$. $f$ is said to have \emph{linear growth} if $\|f(x)\|^2\leq K(1+\|x\|^2) \,\forall\, x\in S_d^{++}$.
\item An $S_d$-valued c\`adl\`ag adapted stochastic process $X$ is called $S_d^+$-increasing, if $X_t\succeq X_s$ a.s. for all $t>s\geq 0$. Such a process is necessarily of finite variation on compacts by \cite[Lemma 5.21]{barndorffstelzer} and hence a semimartingale. We call it of \emph{pure jump type} provided $X_t=X_0+\sum_{0<s\leq t}\Delta X_s$, where $\Delta X_s=X_s-X_{s-}$.
\end{itemize}
For the necessary background on stochastic analysis we refer to one of the standard references like \cite{jacod,protter,revuzyor}. Moreover, we frequently employ stochastic integrals where the integrands or integrators are matrix- or even linear-operator valued. Thus, we briefly explain how they have to be understood.
Let $(A_t)_{t\in\bbr^+}$ in $M_{d}$, $(B_t)_{t\in\bbr^+}$ in $M_{d}$ be c\`adl\`ag and adapted processes and $ (L_t)_{t\in\bbr^+}$ in $M_{d}$   be a semimartingale (i.e. each element is a semimartingale). Then we denote by $\int_0^t A_{s-}dL_s B_{s-}$ the  matrix $C_t$ in $M_{d}$ which has   ${ij}$-th element $C_{ij,t}=\sum_{k=1}^d\sum_{l=1}^d\int_0^t A_{ik,s-}B_{lj,s-}dL_{kl,s}$. Equivalently such an integral can be understood in the sense of \cite{Metivieretal1980} by identifying it with the integral $\int_0^t \mathbf{A}_{s-}dL_s$ with $\mathbf{A}_t$ being for each fixed $t$ the linear operator $M_{d}\to M_{d},\: X\mapsto A_t X B_t$ and $L$ being a semimartingale in the Hilbert space $ M_{d}$.
 Stochastic integrals of the form $\int_0^t K(X_{s-})dJ_s$ with $J$ being a semimartingale in $M_{d}$ (coordinatewise or equivalently as in \cite[Section 10]{Metivieretal1980} where the equivalence easily follows from \cite[Section 10.9]{Metivieretal1980} and by noting that on a finite dimensional Hilbert space all norms are equivalent) and $K(x):M_{d}\to M_{d}$ a linear operator for all $x$ can be understood again as in \cite{Metivieretal1980}. Alternatively, one can equivalently identify $M_{d}$ with  $\bbr^{d^2}$ using the $\mathrm{vec}$-operator and $K(x)$ with a matrix in $M_{d^2,d^2}$ and then define the stochastic integral coordinatewise as above.
\section{Statement of the main results}\label{sec:results}
\subsection{Wishart diffusions with jumps}~\\
In order to illustrate the context of our result and, because it is of most relevance in applications, we discuss first the special case of Wishart diffusions with jumps.  For $Q\in M_d$, $\delta> d-1$, $\beta\in M_d$ and an $M_d$-valued standard Brownian motion $B$, a Wishart
process is the strong   solution of the equation
\begin{align}\label{eq: bru}
dX_t&=\sqrt{X_t}dB_tQ+Q^\top dB_t^\top\sqrt{X_t}+(X_t \beta+\beta^\top
X_t+\delta Q^\top Q)dt,\\
 X_0&=x\in S_d^{++},\nonumber
\end{align}
on the maximal stochastic interval $[0, T_x)$, where $T_x$ is naturally defined
as
\[
T_x=\inf\{t>0\,:\, X_t\in\partial S_d^+\}.
\]
That such a unique local strong solution, which does not explode before or at time $T_x$, exists, follows from standard SDE theory, since all the coefficients in \eqref{eq: bru} are locally Lipschitz and of linear growth on $S_d^{++}$. To be more precise, this follows by appropriately localising the usual results as e.g. in \cite[Chapter V]{protter} or by variations of the proofs in \cite[Chapter 3]{Metivieretal1980}. A localisation procedure adapted particularly to certain convex sets like $S_d^+$ is  presented in detail in \cite[Section 6.7]{Stelzer2007th}.

The following is a summary of the results \cite[Theorem 2, 2' and 2'']{bru} -- the to the best of our knowledge only known results regarding strong existence of Wishart processes:
\begin{proposition}\label{th: bru}
Let $\delta\geq d+1$.
\begin{enumerate}
\item If $Q=I_d$ and $\beta=0$, then $T_x=\infty$.
\end{enumerate}
Suppose additionally that the $d$ eigenvalues of $x$ are distinct.
\begin{enumerate}\setcounter{enumi}{1}
\item\label{stat: bru1}  If $Q\in S_d^{++}$, $-\beta\in S_d^+$ such that $\beta$ and $Q$ commute,
then there exists a solution $(X_t)_{t\in\mathbb{R}_+}$  of \eqref{eq: bru} until the first time $\tau_x$
when two of the eigenvalues  of $X_t$ collide.
\item \label{stat: bru2} If $\beta=\beta_0 I_d$ and $Q=\gamma I_d$, where $\beta_0,\gamma\in\mathbb R$,
then $T_x=\infty$ for the solution of $(X_t)_{t\in\mathbb{R}_+}$ of \eqref{eq: bru}.
\end{enumerate}
Consequently, for the respective choice of parameters, there exist unique global strong $S_d^{++}$-valued solutions of the SDE \eqref{eq: bru}  on $[0,\tau_x)$ resp.\ on all of $[0,\infty)$.
\end{proposition}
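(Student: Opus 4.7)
The proposition compiles classical results of Bru \cite{bru}, so the natural plan is to reproduce her strategy: first establish the conclusion in the simplest parameter regime $Q=I_d$, $\beta=0$, and then reduce the remaining cases to this base case by a Girsanov change of measure, exploiting the assumed commutativity of $Q$ and $\beta$.

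For statement (i), I would diagonalise the process. General local SDE theory yields a unique strong solution $X$ of \eqref{eq: bru} in $S_d^{++}$ up to the hitting time $T_x$ of $\partial S_d^+$, and a direct It\^o calculation shows that the eigenvalues $\lambda_1(t)\le\cdots\le\lambda_d(t)$ of $X_t$ satisfy a Dyson-type system of SDEs in which the smallest eigenvalue can be bounded below, by a pathwise comparison, by a squared Bessel process of dimension $\delta-d+1\geq 2$. Since such a Bessel process almost surely avoids $0$, one has $\lambda_1(t)>0$ for all $t$, hence $T_x=\infty$. This is the classical \emph{McKean argument}.

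For statements (ii) and (iii), I would introduce an equivalent probability measure $\mathbb{Q}$ whose Radon-Nikodym density is the stochastic exponential of the continuous local martingale $\int_0^{\,\cdot}\sqrt{X_s}^{-1}(X_s\beta+\beta^\top X_s)Q^{-1}\,dB_s$. Under the commutativity hypothesis in (ii), or the scalar hypothesis in (iii), this integrand simplifies enough that Girsanov's theorem turns $B$ into a new Brownian motion and removes the linear drift, reducing \eqref{eq: bru} under $\mathbb{Q}$ to the pure covariance case settled in the previous step. The conclusion lifts back to the original measure, giving a unique global strong solution on $[0,\tau_x)$ in case (ii) and on $[0,\infty)$ in case (iii), where in (ii) the eigenvalue-collision time $\tau_x$ arises as the first time the spectral representation used below breaks down.

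The main obstacle is precisely that the Radon-Nikodym density must be shown to be a genuine martingale and not merely a local martingale. A Novikov-type criterion is unavailable because one has no a priori control on $\|X_s\|$, and Bru handles this via an explicit spectral representation of the exponent, which remains tractable only while the eigenvalues of $X$ stay distinct. This limitation, combined with the need to commute $Q$ and $\beta$, is exactly what the sequel of the paper is designed to bypass by avoiding Girsanov's theorem altogether.
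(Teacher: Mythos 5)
The paper gives no proof of this proposition: it is quoted from Bru's Theorems 2, 2$'$ and 2$''$, and her method is only sketched in Remark \ref{remark on bru}. Your treatment of (ii) and (iii) --- reduce to the driftless case by a Girsanov change of measure, with the true-martingale property of the density as the main obstacle and the commutativity/scalarity assumptions as the price for tractability --- is exactly the route described there, so that part is consistent with the source.

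Your argument for (i), however, has a genuine gap. For $Q=I_d$, $\beta=0$ the ordered eigenvalues satisfy, up to the first collision time,
\[
d\lambda_i = 2\sqrt{\lambda_i}\,d\beta_i + \Bigl(\delta + \sum_{k\neq i}\frac{\lambda_i+\lambda_k}{\lambda_i-\lambda_k}\Bigr)dt .
\]
For the smallest eigenvalue every interaction term satisfies $\frac{\lambda_1+\lambda_k}{\lambda_1-\lambda_k}=-1-\frac{2\lambda_1}{\lambda_k-\lambda_1}\le -1$, so the drift of $\lambda_1$ is at most $\delta-(d-1)$: the repulsion from the larger eigenvalues pushes $\lambda_1$ \emph{towards} the boundary, and a comparison theorem therefore bounds $\lambda_1$ from \emph{above} by a $\mathrm{BESQ}(\delta-d+1)$. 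The lower bound you assert is precisely the one that comparison does not deliver, so positivity of $\lambda_1$ does not follow. (In addition, the eigenvalue SDE is only valid before a collision, whereas statement (i) makes no assumption that the eigenvalues of $x$ are distinct.) The argument Bru actually uses --- and the one this paper generalises in Theorem \ref{th: strong solutions general} --- is McKean's argument applied to $h(\det X)$, with $h=\ln$ for $\delta=d+1$ and $h(z)=z^{d+1-\delta}$ for $\delta>d+1$: the finite-variation part of $\ln\det X_t$ is $(\delta-d-1)\tr(X_t^{-1})\,dt\ge 0$, so $\ln\det X$ cannot tend to $-\infty$ in finite time by Proposition \ref{lem: MCKean}. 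Note also that ``McKean's argument'' refers to this local-martingales-do-not-converge-to-$-\infty$ device, not to the Bessel comparison you describe under that name.
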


The upcoming general Theorem \ref{th: strong solutions general} implies  the following result for a generalisation of the Wishart SDE allowing for additional jumps and a non-linear drift $\Gamma$.
\begin{corollary}\label{th: strong solutions}
Let $b\in S_d$, $Q\in M_d$, $\beta\in M_d$, and let
\begin{itemize}
\item  $J$ be an $S_d$-valued c\`adl\`ag adapted process which is $S_d^+$-increasing and of pure jump type,
\item $\Gamma:S_d^{++}\to S_d^+$ be a locally Lipschitz function of linear growth and
\item $K: S_d^{++}\to L(S_d^+,S_d^+)$ (the linear operators on $S_d$ mapping $S_d^+$ into $S_d^+$) be a locally Lipschitz function of linear growth.
\end{itemize}
If $b\succeq(d+1)Q^{\top}Q$, then the SDE
\begin{align}\label{gensde}
dX_t=&\,\sqrt{X_{t-}}dB_tQ+Q^\top dB_t^{\top}\sqrt{X_{t-}}+ (X_{t-} \beta+\beta^\top
X_{t-}+\Gamma(X_{t-})+b)dt+K(X_{t-})dJ_t,\\
X_0=&\,x\in S_d^{++},\nonumber
\end{align}
has a unique adapted c\`adl\`ag global strong solution $(X_t)_{t\in\mathbb{R}_+}$ on $S_d^{++}$.
 In particular we have $T_x:=\inf\{t\geq 0\,:\, X_{t-}\in\partial S_d^+\mbox { or }X_t\not\in S_d^{++}\}=\inf\{t\geq 0\,:\, X_{t-}\in\partial S_d^+\}=\infty$ almost surely.
\end{corollary}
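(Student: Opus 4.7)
The approach is to generalise McKean's argument, reducing global existence to controlling the behaviour of $\log\det X_t$ near the boundary $\partial S_d^+$.

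First, standard SDE theory on the open set $S_d^{++}$ yields a unique adapted c\`adl\`ag strong solution on the maximal stochastic interval $[0,T_x)$: the drift and diffusion coefficients of \eqref{gensde} are locally Lipschitz on $S_d^{++}$ (since $x\mapsto\sqrt x$ is smooth there, and $\Gamma$ and $K$ are locally Lipschitz by hypothesis) and of linear growth, and the localisation procedure suited to $S_d^+$ in \cite[Section~6.7]{Stelzer2007th} applies. To see that the two expressions for $T_x$ in the statement coincide, note that if $X_{t-}\in S_d^{++}$ then $K(X_{t-})\Delta J_t\in S_d^+$ by the hypothesis on $K$, hence $X_t=X_{t-}+K(X_{t-})\Delta J_t\in S_d^{++}+S_d^+\subseteq S_d^{++}$; so jumps cannot force the process onto the boundary.

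Next, I would apply It\^o's formula to $\phi(X)=\log\det X$ on $[0,T_x)$. Using $D\phi(X)[H]=\tr(X^{-1}H)$ and $D^2\phi(X)[H,H]=-\tr(X^{-1}HX^{-1}H)$, the Wishart-type quadratic-variation computation (as in \cite{bru}) yields
\begin{equation*}
 d\log\det X_t = 2\,\tr\!\bigl(X_t^{-1/2}\,dB_t\,Q\bigr) + \Bigl[2\tr(\beta) + \tr\!\bigl(X_t^{-1}(b-(d+1)Q^\top Q)\bigr) + \tr\!\bigl(X_t^{-1}\Gamma(X_t)\bigr)\Bigr]dt + dN_t,
\end{equation*}
where $N_t=\sum_{s\leq t}\log\det\bigl(I+X_{s-}^{-1}K(X_{s-})\Delta J_s\bigr)\geq 0$: indeed, $X_{s-}^{-1}K(X_{s-})\Delta J_s$ is similar to the positive semidefinite matrix $X_{s-}^{-1/2}K(X_{s-})\Delta J_s\,X_{s-}^{-1/2}$, so all its eigenvalues are non-negative. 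Under the assumption $b\succeq(d+1)Q^\top Q$ together with $\Gamma(X_t)\in S_d^+$, both trace terms in the bracket are non-negative, since the trace of the product of two positive semidefinite matrices is non-negative. Consequently,
\begin{equation*}
 \log\det X_t \;\geq\; \log\det x + M_t + 2\tr(\beta)\,t \qquad \text{for all } t\in[0,T_x),
\end{equation*}
where $M_t=2\int_0^t\tr(X_s^{-1/2}dB_s\,Q)$ is a continuous local martingale on the stochastic interval $[0,T_x)$.

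The main obstacle is to conclude $T_x=\infty$ from this lower bound without invoking Girsanov's theorem (which would reimpose Bru's stringent parametric restrictions). This is precisely the content of the generalised McKean result, Proposition \ref{lem: MCKean}: a continuous local martingale on a stochastic interval $[0,\tau)$ cannot tend to $-\infty$ as $t\uparrow\tau$ on an event of positive probability inside $\{\tau<\infty\}$. Indeed, if $\mathbb{P}(T_x<\infty)>0$, then on that event $X_t$ approaches $\partial S_d^+$ and so $\log\det X_t\to-\infty$; the displayed inequality then forces $M_t\to-\infty$, contradicting Proposition \ref{lem: MCKean}. Hence $T_x=\infty$ almost surely, and uniqueness is already part of the local statement. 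More economically, the corollary is a direct instance of the general Theorem \ref{th: strong solutions general}, whose hypotheses on $\Gamma$, $K$, and $J$ are visibly met under our assumptions.
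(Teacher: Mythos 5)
Your proposal is correct and takes essentially the paper's approach: the paper proves the corollary by verifying the drift condition \eqref{duplo enigmato drifto conditione} via the same selfduality/trace-positivity estimate (yielding the constant lower bound $c(t)=2\tr(\beta)$) and then citing Theorem \ref{th: strong solutions general}, whose proof is precisely the $\log\det$--It\^o--McKean argument you unroll in this special case.
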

\begin{proof}
For the term on the right hand side of the upcoming condition \eqref{duplo enigmato drifto conditione} we obtain
\begin{align*}
 \tr(2\beta)+\tr(\Gamma(x)x^{-1})+\tr((b-(d+1) Q^{\top}Q)x^{-1})\geq 2\tr(\beta),
\end{align*}
 noting that $x^{-1}$, $\Gamma(x)$ and $ b-(d+1) Q^{\top}Q$ are positive semidefinite and that $S_d^+$ is a selfdual cone, which implies that $\tr(zy)\geq 0$ for any $z,y\in S_d^+$. Setting $c(t)=2\tr(\beta)$ an application of Theorem  \ref{th: strong solutions general} concludes.
\end{proof}

By choosing $\Gamma$ linear and $J=0$, we obtain a result for \eqref{eq: affine process} which considerably generalises Proposition \ref{th: bru}.
\begin{remark}
\begin{enumerate}
\item In the univariate case the condition $b\succeq(d+1)Q^{\top}Q$ is known to be also necessary for boundary non-attainment (see \cite[Chapter XI]{revuzyor}). %However, for $d>1$ it is not clear, whether the condition
%$b\succeq(d+1)Q^\top Q$ for the drift is a necessary non-attainability
%condition or not. Only in the case $\beta=0,\Gamma=0,g=0, Q=I_d$ and $b=\delta I_d$ with $\delta\in(d-1,d+1)$ it is known from \cite[Theorem 1.4]{donatimartin} that the boundary is hit.
\item A possible choice for $J$ %not $L$
 is a matrix subordinator without drift (see \cite{BarndorffetPerez2005}), i.e. an $S_d^+$-increasing L\'evy process. By choosing $\Gamma\neq 0$ in \eqref{gensde} appropriately our results also apply to SDEs involving matrix subordinators with a non-vanishing drift.
\item Setting $Q=0$, $\Gamma=0$, $K$ to the identity and $b$ equal to the drift of the matrix subordinator, Equation \eqref{gensde} becomes the SDE of a positive definite OU type process, \cite{barndorffstelzer,PigorschetStelzer2007b}. Likewise, it is straightforward to see that the SDE of the volatility process $Y$ of the multivariate COGARCH process of \cite{Stelzer2008mcg} is a special case of \eqref{gensde}.
\item An OU--type process on the positive semidefinite matrices is necessarily driven by a L\'evy process of finite variation having positive semidefinite jumps only (follows by slightly adapting the arguments in the proof of \cite[Theorem 4.9]{PigorschetStelzer2007b}). This entails that a generalisation of the above result to a more general jump behaviour requires additional technical restrictions.
\end{enumerate}
\end{remark}\vspace*{0.2cm}

\subsection{The general SDE and  existence result}~\\
The main result of this paper is the  following general
theorem concerning non-attainment of the boundary of $S_d^{+}$ and the existence of a unique global
strong solution for a generalisation of the SDE \eqref{eq: affine process}. The proof of this result is gradually developed in the next sections.

\begin{theorem}\label{th: strong solutions general}
Let
\begin{itemize}
\item $F, G: \mathbb R_+\times S_d^{++}\rightarrow
M_d$, be functions such that $G^\top\otimes F$ given by $G^\top\otimes F  (t,x)= (G(t,x))^\top\otimes F(t,x)$ is  locally Lipschitz and of linear growth,
\item $H: \mathbb R_+\times S_d^{++}\rightarrow
S_d$ be locally Lipschitz and of linear growth,
\item  $J$ be an $S_d$-valued c\`adl\`ag adapted process which is $S_d^+$-increasing and of pure jump type,
\item and $K: S_d^{++}\to L(S_d^+,S_d^+)$ (the linear operators on $S_d$ mapping $S_d^+$ into $S_d^+$) be a locally Lipschitz function of linear growth.
\end{itemize}
Suppose that there exists a  function $c:\mathbb{R}_+\to\mathbb{R}$ which is locally integrable, i.e. $\int_0^s|c(t)|dt<\infty$ for all $s\in\mathbb{R}^+$, such that
\begin{equation}\label{duplo enigmato drifto conditione}
c(t)\leq\tr(H(t,x)x^{-1})-\tr(f(t,x)x^{-1})\tr(g(t,x)x^{-1})-\tr(f(t,x)x^{-1}g(t,x)x^{-1})
\end{equation}
for all $x\in S_d^{++}$ and $t\in\mathbb{R}_+$ where  $f(t,x):=F(t,x)F(t,x)^\top,$ $ g(t,x)=G(t,x)^\top G(t,x)$.

Then the SDE
\begin{align}\label{gensde1}
dX_t=&F(t,X_{t-})dB_tG(t,X_{t-})+G(t,X_{t-})^\top dB_t^\top F(t,X_{t-})^\top\\&+
H(t,X_{t-})dt+K(X_{t-})dJ_t,\nonumber\\
X_0=&x\in S_d^{++},\nonumber
\end{align}
has a unique adapted c\`adl\`ag global strong solution
$(X_t)_{t\in\mathbb{R}_+}$ on $S_d^{++}$.

In particular, we have $T_x:=\inf\{t\geq 0\,:\, X_{t-}\in\partial S_d^+\mbox { or }X_t\not\in S_d^{++}\}=\inf\{t\geq 0\,:\, X_{t-}\in\partial S_d^+\}=\infty$ almost surely.
\end{theorem}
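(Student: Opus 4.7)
The approach is the classical \emph{McKean argument} adapted to the positive definite matrix setting: use $\log\det$ as a Lyapunov function which blows up to $-\infty$ exactly at $\partial S_d^+$, combine It\^o's formula with the drift hypothesis \eqref{duplo enigmato drifto conditione}, and invoke the (generalised) McKean principle to rule out boundary attainment.

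\medskip
\emph{Local strong solution.} First, since $G^\top\otimes F$, $H$ and $K$ are locally Lipschitz and of linear growth on $S_d^{++}$, standard SDE theory (with the localisation procedure mentioned just after Proposition \ref{th: bru}) produces a unique strong solution $X$ of \eqref{gensde1} on a maximal stochastic interval $[0,T_x)$. Moreover, whenever $X_{t-}\in S_d^{++}$ one has $\Delta X_t = K(X_{t-})\Delta J_t\in S_d^+$, so $X_t = X_{t-}+\Delta X_t\in S_d^{++}$. Thus jumps cannot send the process to $\partial S_d^+$, and $T_x$ coincides with the first time the continuous part of $X$ reaches the boundary.

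\medskip
\emph{It\^o's formula for $\varphi(X_t) := \log\det(X_t)$.} Using $D\varphi(x)[A] = \tr(x^{-1}A)$ and $D^2\varphi(x)[A,B] = -\tr(x^{-1}Ax^{-1}B)$ together with the covariation structure of the matrix Brownian integrals $F\,dB\,G$ and $G^\top dB^\top F^\top$, a direct calculation on $[0,T_x)$ yields
\begin{equation*}
d\varphi(X_t) \;=\; dM_t \;+\; \Bigl[\tr(H(t,X_{t-})X_{t-}^{-1}) - \tr(fX_{t-}^{-1})\tr(gX_{t-}^{-1}) - \tr(fX_{t-}^{-1}gX_{t-}^{-1})\Bigr]dt \;+\; dJ_t^\varphi,
\end{equation*}
where $M$ is a continuous local martingale on the stochastic interval $[0,T_x)$ and $J_t^\varphi := \sum_{0<s\le t}(\varphi(X_s)-\varphi(X_{s-}))$. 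Because $X_s \succeq X_{s-}$ and $\det$ is monotone on $S_d^+$, the jump contribution $J_t^\varphi$ is nondecreasing. By hypothesis \eqref{duplo enigmato drifto conditione} the absolutely continuous drift dominates $c(t)$, so that for every $t\ge 0$,
\begin{equation*}
\varphi(X_{t\wedge T_x}) \;\geq\; \varphi(X_0) + M_{t\wedge T_x} + \int_0^{t\wedge T_x} c(s)\,ds.
\end{equation*}

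\medskip
\emph{McKean's argument and conclusion.} Suppose for contradiction that $\mathbb{P}(T_x<\infty)>0$. On this event $X_{T_x-}\in\partial S_d^+$, whence by continuity of $\det$ we have $\varphi(X_t)\to -\infty$ as $t\uparrow T_x$. Since $c$ is locally integrable, $\int_0^{T_x}c(s)\,ds$ is a.s.\ finite on $\{T_x<\infty\}$, and the inequality above then forces $\liminf_{t\uparrow T_x}M_t=-\infty$ on a set of positive probability. This is exactly what is forbidden by the generalised McKean argument (Proposition \ref{lem: MCKean}), which states that a continuous local martingale on the stochastic interval $[0,T_x)$ cannot diverge to $-\infty$ at its endpoint. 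Hence $T_x=\infty$ a.s., and the local solution is in fact a unique c\`adl\`ag global strong solution with values in $S_d^{++}$.

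\medskip
\emph{Main obstacles.} The two delicate points are (i) executing the It\^o formula cleanly with the matrix-valued Brownian integrands and verifying that the quadratic variation correction produces precisely the combination $\tr(fx^{-1})\tr(gx^{-1}) + \tr(fx^{-1}gx^{-1})$ for which the drift hypothesis \eqref{duplo enigmato drifto conditione} is tailor-made; and (ii) the rigorous invocation of McKean's principle for a continuous local martingale defined only on a stochastic interval, which is the raison d'\^etre of Proposition \ref{lem: MCKean} developed in Section \ref{section: prelim}.
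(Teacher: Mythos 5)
Your proposal is correct and follows essentially the same route as the paper: local existence from local Lipschitzness and linear growth, positivity of the jumps so only the continuous part can reach $\partial S_d^+$, It\^o's formula yielding the drift $\tr(Hx^{-1})-\tr(fx^{-1})\tr(gx^{-1})-\tr(fx^{-1}gx^{-1})$ plus a nondecreasing jump term, and Proposition \ref{lem: MCKean} to rule out $T_x<\infty$. The only (immaterial) difference is that you apply It\^o directly to $\log\det$, whereas the paper first treats $\det(X_t)$ via Lemma \ref{lem: matrixcalculus} and then takes logarithms.
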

% \begin{remark}\label{rem:genthlocal}
% From the proof it is easy to see that by a localisation argument it suffices to demand that for any $n\in\mathbb{N}$ there exists a c\`adl\`ag function $c_n:\mathbb{R}_+\to\mathbb{R}$ such that \eqref{duplo enigmato drifto conditione} holds with $c_n$ in place of $c$ for all $x$ with $\|x\|\leq n$ and for all $t\leq n$.
% \end{remark}
\vspace*{0.2cm}

\subsection{Positive definite extensions of generalised Cox-Ingersoll-Ross processes and GARCH diffusions}~\\
In the univariate case generalised Cox-Ingersoll-Ross (GCIR) processes given by the SDE $dx_t=(b+ax_t)dt+qx_t^{\alpha}dB_t$ with $b\geq0, q>0, a\in\mathbb{R}$ and $\alpha\in [1/2,1]$ are -- as discussed in the introduction -- of relevance in financial modelling. $\alpha=1/2$ corresponds, of course, to the already discussed Bessel  case, whereas $\alpha=1$ gives the so-called GARCH diffusions. Given the popularity of the Wishart based models in nowadays finance, it seems natural to consider also positive semidefinite extensions of the GCIR processes. An application of our general theorem to the  case where
$F(X)=X^{\alpha}$, $G(X)=Q$ with $\alpha\in[1/2,1]$ yields:
\begin{corollary}\label{th:gcir}
(i) Let $\alpha\in[1/2,1]$, $b\in S_d$, $Q\in M_d$, $\beta\in M_d$, and let
\begin{itemize}
\item  $J$ be an $S_d$-valued c\`adl\`ag adapted process which is $S_d^+$-increasing and of pure jump type,
\item $\Gamma:S_d^{++}\to S_d^+$ be a locally Lipschitz function of linear growth and
\item $K: S_d^{++}\to L(S_d^+,S_d^+)$ (the linear operators on $S_d$ mapping $S_d^+$ into $S_d^+$) be a locally Lipschitz function of linear growth.
\end{itemize}
 Suppose that for all $x\in S_d^{++}$
\begin{equation}\label{eq:condalpha}
 \tr(\Gamma(x)x^{-1}+bx^{-1})\geq \tr(x^{2\alpha-1})\tr(Q^\top Qx^{-1})+ \tr(x^{2\alpha-2}Q^\top Q).
\end{equation}

 Then the SDE
\begin{align}\label{gensde alpha}
dX_t&=X_{t-}^\alpha B_tQ+Q^\top dB_t^\top X_{t-}^\alpha+ (X_{t-}
\beta+\beta^\top
X_{t-}+\Gamma(X_{t-})+b)dt+K(X_{t-})dJ_t,\\
X_0&=x\in S_d^{++},\nonumber
\end{align}
has a unique adapted c\`adl\`ag global strong solution
$(X_t)_{t\in\mathbb{R}_+}$ on $S_d^{++}$. In particular we have
$T_x:=\inf\{t\geq 0\,:\, X_{t-}\in\partial S_d^+\mbox { or
}X_t\not\in S_d^{++}\}=\inf\{t\geq 0\,:\, X_{t-}\in\partial
S_d^+\}=\infty$ almost surely.

(ii) Any of the following sets of conditions implies \eqref{eq:condalpha}:
\begin{enumerate}
 \item[(a)] $
b+\Gamma(x)\succeq \tr(x^{2\alpha-1})Q^\top Q
+x^{\alpha-1/2}Q^\top Qx^{\alpha-1/2}$ for all $x\in S_d^{++}$.
\item[(b)] $
b+\Gamma(x)\succeq \tr(x^{2\alpha-1})Q^\top Q
+\lambda_{Q^\top Q}x^{2\alpha-1}$ for all $x\in S_d^{++}$ with $\lambda_{Q^\top Q}$ denoting the largest eigenvalue of $Q^\top Q$.
\item[(c)] $\alpha=1$ and $b+\Gamma(x)\succeq \tr(x) Q^\top Q+\lambda_{Q^\top Q}x$ for all $x\in S_d^{++}$ .
\item[(d)] $b\succeq 0$ and $\Gamma(x)\succeq 2\tr(x^{2\alpha-1})Q^\top Q$ for all $x\in S_d^{++}$.
\item[(e)] $b\succeq 0$ and $\Gamma(x)\succeq 2\left(\tr(x)+d(2\alpha-1)^{2-2\alpha}\right)Q^\top Q$  for all $x\in S_d^{++}$ (and setting $0^0:=1$ for $\alpha=1/2$).
\item[(f)] $b\succeq 0$ and $\Gamma(x)\succeq 2(\tr(x)+d)Q^\top Q$  for all $x\in S_d^{++}$.
\item[(g)] $\alpha>1/2$, $d=1$, $\Gamma(x)\geq 0$  for all $x\in \mathbb{R}_+$ and $b>0$.
\end{enumerate}
\end{corollary}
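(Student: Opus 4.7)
Part (i) will be a direct application of Theorem \ref{th: strong solutions general} with the choices $F(t,x)=x^\alpha$, $G(t,x)=Q$, $H(t,x)=x\beta+\beta^\top x+\Gamma(x)+b$, and the given $J,K$. The only non-obvious regularity point to verify is that the fractional-power map $x\mapsto x^\alpha$ is locally Lipschitz and of linear growth on $S_d^{++}$ for $\alpha\in[1/2,1]$, which I would get from the Balakrishnan (or spectral) representation of fractional powers together with Kronecker-product manipulations of the kind appearing in the general theorem; the regularity of $H$, $K$, and $J$ is immediate from the hypotheses.

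The heart of the verification is the drift bound \eqref{duplo enigmato drifto conditione}. With the above choices, $f=FF^\top=x^{2\alpha}$ and $g=Q^\top Q$, so by cyclicity of $\tr$ and $\tr((x\beta+\beta^\top x)x^{-1})=2\tr\beta$, the right-hand side of \eqref{duplo enigmato drifto conditione} becomes
\[
2\tr(\beta)+\tr\bigl((\Gamma(x)+b)x^{-1}\bigr)-\tr(x^{2\alpha-1})\tr(Q^\top Qx^{-1})-\tr(x^{2\alpha-2}Q^\top Q).
\]
Hypothesis \eqref{eq:condalpha} says the last three terms together are non-negative, so the constant choice $c(t)\equiv 2\tr(\beta)$ fulfils \eqref{duplo enigmato drifto conditione} and Theorem \ref{th: strong solutions general} yields part (i).

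For (ii) my toolbox is self-duality of $S_d^+$ (so that $A\succeq B$ and $C\succeq 0$ imply $\tr(AC)\geq\tr(BC)$), cyclicity of $\tr$, and the two elementary psd trace bounds $\tr(AB)\leq\lambda_{\max}(A)\tr(B)\leq\tr(A)\tr(B)$. Condition (a) gives \eqref{eq:condalpha} immediately after pairing with $x^{-1}$ and rewriting $\tr(x^{\alpha-1/2}Q^\top Qx^{\alpha-1/2}x^{-1})=\tr(x^{2\alpha-2}Q^\top Q)$; condition (b) is stronger than (a) because $x^{\alpha-1/2}Q^\top Qx^{\alpha-1/2}\preceq\lambda_{Q^\top Q}x^{2\alpha-1}$, and (c) is (b) specialised to $\alpha=1$. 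For (d) I would introduce $Y:=x^{-1/2}Q^\top Qx^{-1/2}\succeq 0$, which turns the needed inequality into $\tr(x^{2\alpha-1})\tr(Y)\geq\tr(x^{2\alpha-1}Y)$, an instance of the psd trace bound above.

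The remaining implications (f)$\Rightarrow$(e)$\Rightarrow$(d) rest on the scalar estimate $\lambda+(2\alpha-1)^{2-2\alpha}\geq\lambda^{2\alpha-1}$ on $[0,\infty)$ (applied eigenvalue-by-eigenvalue to $x$) together with the trivial bound $(2\alpha-1)^{2-2\alpha}\leq 1$ on $[1/2,1]$; I expect the one-variable calculus optimisation producing this sharp constant (critical point $\lambda_0=(2\alpha-1)^{1/(2-2\alpha)}$) to be the main technical obstacle, since everything else is algebra on the cone. Finally, (g) is the scalar case, where \eqref{eq:condalpha} collapses to $\Gamma(x)+b\geq 2Q^2x^{2\alpha-1}$, which does not follow from $\Gamma\geq 0$ and $b>0$ alone for large $x$; I would therefore bypass \eqref{eq:condalpha} and verify \eqref{duplo enigmato drifto conditione} directly, using that for $\alpha>1/2$ the exponent $2\alpha-2$ exceeds $-1$ so that $b/x-2Q^2x^{2\alpha-2}$ is bounded below on $(0,\infty)$ ($b/x$ dominates as $x\downarrow 0$ and both terms are controlled as $x\uparrow\infty$), and then taking $c(t)\equiv 2\tr(\beta)+\inf_{x>0}(b/x-2Q^2x^{2\alpha-2})$.
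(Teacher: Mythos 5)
Your proposal follows the paper's proof essentially step by step: part (i) is the same computation of the right-hand side of \eqref{duplo enigmato drifto conditione} with $f(t,x)=x^{2\alpha}$, $g(t,x)=Q^\top Q$ and the choice $c(t)\equiv 2\tr(\beta)$; (a)--(c) use self-duality of $S_d^+$ and $x^{\alpha-1/2}Q^\top Qx^{\alpha-1/2}\preceq\lambda_{Q^\top Q}x^{2\alpha-1}$ exactly as the paper does; your congruence trick with $Y=x^{-1/2}Q^\top Qx^{-1/2}$ in (d) is just a cleaner phrasing of the paper's bound $\tr(x^{2\alpha-2}Q^\top Q)\le\tr(x^{2\alpha-1})\tr(Q^\top Qx^{-1})$, which it derives from $\tr(AB)\le\tr(A)\tr(B)$; and for (g) you correctly observe, as the paper implicitly does, that \eqref{eq:condalpha} itself fails for large $x$, so one must verify \eqref{duplo enigmato drifto conditione} directly, using that $b/x-2Q^2x^{2\alpha-2}$ is bounded below on $(0,\infty)$ because $2\alpha-2>-1$ --- identical to the paper's argument.

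The one step that would fail is (e), and it fails in your write-up exactly where it is also flawed in the paper. The critical point you compute, $\lambda_0=(2\alpha-1)^{1/(2-2\alpha)}$, is correct, but the resulting maximum of $\lambda^{2\alpha-1}-\lambda$ over $[0,1]$ is $(2-2\alpha)\,(2\alpha-1)^{(2\alpha-1)/(2-2\alpha)}$, not $(2\alpha-1)^{2-2\alpha}$. Consequently the scalar estimate $\lambda^{2\alpha-1}\le\lambda+(2\alpha-1)^{2-2\alpha}$ you invoke is false for $\alpha$ close to $1/2$: for $\alpha=0.55$ and $\lambda=\lambda_0\approx 0.077$ one has $\lambda^{0.1}\approx 0.77$ while $\lambda+(0.1)^{0.9}\approx 0.20$, so the chain (e)$\Rightarrow$(d) breaks there; in fact with the printed constant, (e) does not imply \eqref{eq:condalpha} at all (take $Q=I_d$, $b=0$, $\Gamma(x)=2(\tr(x)+d(2\alpha-1)^{2-2\alpha})I_d$ and $x=\lambda_0 I_d$). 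The paper's own proof asserts the same incorrect value for this maximum, and the parenthetical ``$0^0:=1$ for $\alpha=1/2$'' only makes sense for the corrected exponent, so this appears to be a typo in the statement of (e) that you have faithfully reproduced rather than an error of your own; with the corrected constant your eigenvalue-by-eigenvalue argument for (e) goes through, and (f) is unaffected either way, since $\lambda^{2\alpha-1}\le\lambda+1$ yields (f)$\Rightarrow$(d) directly.
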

\begin{proof}
 One easily calculates the right hand side of \eqref{duplo enigmato drifto conditione} to be equal to
$\tr(2\beta+\Gamma(x)x^{-1}+bx^{-1})- \tr(x^{2\alpha-1})\tr(Q^\top Qx^{-1})- \tr(x^{2\alpha-2}Q^\top Q)$ and hence (i) follows from Theorem \ref{th: strong solutions general}.

Turning to the proof of (ii) using the selfduality of $S_d^+$ as in the proof of Corollary \ref{th: strong solutions} gives (a). Next we observe that $Q^\top Q\preceq\lambda_{Q^\top Q} I_d$  and, hence, $x^{\alpha-1/2}Q^\top Qx^{\alpha-1/2}\preceq  \lambda_{Q^\top Q}x^{2\alpha-1}$. This gives (b) and (c) is simply the special case for $\alpha=1$.

Since for $A,B\in S_d^+$ we have that $\tr(AB)\leq \tr(A)\tr(B)$ due to the Cauchy-Schwarz inequality and the elementary inequality $\sqrt{a+b}\leq \sqrt{a}+\sqrt{b}$ for all $a,b\in\mathbb{R}_+$, we have that $\tr(x^{2\alpha-2}Q^\top Q)\leq\tr(x^{2\alpha-1})\tr(Q^\top Qx^{-1})$. Hence, \eqref{eq:condalpha} is implied by $\tr(\Gamma(x)x^{-1}+bx^{-1})\geq 2\tr(x^{2\alpha-1})\tr(Q^\top Qx^{-1})$. Using once again the selfduality gives (d).

Since the trace is the sum of the eigenvalues, $\lambda \geq \lambda^{2\alpha-1}$ for all $\lambda\geq 1$ and $\alpha\in [1/2,1]$ and $\lambda^{2\alpha-1}\leq \lambda +\max_{\lambda\in[0,1]}\left\{\lambda^{2\alpha -1}-\lambda\right\}$ for all $\lambda\in [0,1)$ and $\alpha\in [1/2,1]$, we immediately obtain (e) from (d), because $\max_{\lambda\in[0,1]}\left\{\lambda^{2\alpha -1}-\lambda\right\}=(2\alpha-1)^{2-2\alpha}$. In turn (f) follows from (e) noting that $\max_{\lambda\in[0,1]}\left\{\lambda^{2\alpha -1}-\lambda\right\}\in [0,1]$.

Turning to (g) we have for the right hand side of \eqref{duplo enigmato drifto conditione} in the univariate case
\[
 \ell(x)=2\beta+\Gamma(x)/x+b/x-2Q^2/x^{2-2\alpha}.
\]
Now one notes that the second term is non-negative and that for $b>0$ the term $b/x-2Q^2/x^{2-2\alpha}$ is bounded from below on $\mathbb{R}^+$, because $\lim_{x\to 0,\, x>0}x^{-1}/x^{2\alpha-2}=\infty$. Hence, Theorem  \ref{th: strong solutions} concludes.
\end{proof}
 In the different cases of (ii) a valid choice of $b$ and $\Gamma$ is always obtained by taking them equal to the right hand side of the inequalities. It should be noted that (c) shows that in the positive semidefinite GARCH diffusion generalisation one can always take a linear drift. Likewise, (e) and (f) show that a linear drift is possible for the generalized CIR. For $\alpha=1/2$ the case (d) is again sharp in the univariate setting, but for general dimensions it is a stronger condition than the one given in Corollary \ref{th: strong solutions}.

The last case (g) in particular recovers the well-known univariate result for  $dx_t=(b+ax_t)dt+qx_t^{\alpha}dB_t$ with $b\geq0, q>0, a\in\mathbb{R}$ and $\alpha\in [1/2,1]$. In our matrix-variate case for $\alpha>1/2$ a result similar to the univariate one, viz.   that a strictly positive constant drift is all that is needed to ensure boundary non-attainment, seems to be out of reach. When one tries to use arguments similar to  (e) in general, one would need something like  $
 \tr(bx^{-1})\geq k \tr(x^{2\alpha-1})\tr(Q^\top Qx^{-1})+ K$ with some constants $k>0$ and $ K$ to ensure \eqref{eq:condalpha}. However, when the process comes close to the boundary of the cone, this only means that at least one eigenvalue gets close to zero. Hence,  $\tr(bx^{-1})$ and $\tr(Q^\top Qx^{-1})$ should then go to infinity at a comparable rate. However, all the other eigenvalues of $x$ may still be arbitrarily large and so there is no appropriate upper bound on the term $\tr(x^{2\alpha-1})$.

\section{Proofs}
In this section we gradually prove our main result. As a priori all processes involved are only defined up to a stopping time,
 we collect first some basic definitions regarding stochastic processes defined on stochastic intervals following mainly \cite{maisonneuve1977}.
\begin{definition}
Let $A\in\mathcal{F}$ and  let $T$ be a stopping time.
\begin{itemize}
\item A random variable $X$ on $A$ is a mapping $A\to \mathbb{R}$ which is measurable with respect to the $\sigma$-algebra $A\cap\mathcal{F}$.
\item A family $(X_t)_{t\in \mathbb{R}_+}$ of random variables on $\{t<T\}$ is called a stochastic process on $[0,T)$. If $X_t$ is $\{t<T\}\cap \mathcal{F}_t$-measurable for all $t\in \mathbb{R}_+$, then $X$ is said to be adapted.
\item  An adapted process $M$ on $[0,T)$ is called a continuous local martingale on the interval $[0,T)$ if there exists an increasing sequence of stopping times $(T_n)_{n\in\mathbb{N}}$ and a sequence of continuous  martingales $(M^{(n)})_{n\in\mathbb{N}}$ (in the usual sense on $[0,\infty)$) such that $\lim_{n\to\infty} T_n=T$ a.s. and  $M_{t}=M_t^{(n)}$ on $\{t<T_n\}$.  Other local properties for adapted processes on $[0,T)$ are defined likewise.
\item A semimartingale on $[0,T)$ is the sum of a c\`adl\`ag local martingale on $[0,T)$ and an adapted c\`adl\`ag process of locally finite variation on $[0,T)$.
\item For a continuous local martingale on $[0,T)$ the quadratic variation is the $\mathbb{R}\cup\{\infty\}$-valued stochastic process $[M,M]$ defined by \[[M,M]_t=\sup_{n\in\mathbb{N}}[M^{(n)},M^{(n)}]_{t\wedge T_n}\,\mbox{ for all } t\in\mathbb{R}_+.\]
\end{itemize}
\end{definition}
\subsection{McKean's argument}\label{section: prelim}~\\
In this section we finally establish Proposition \ref{lem: MCKean} which generalises an argument of  \cite[p. 47, Problem 7]{mckean} concerning continuous local martingales on stochastic intervals used, for instance, in \cite{bru_89,bru,norrisrogerswilliams}. We keep the tradition of referring to it as {\it McKean's argument}. Since it may also be helpful in other situations, we state our result and its proof in detail.

\begin{lemma}\label{lem:contlocmart}
 Let $M$ be a continuous local martingale on a stochastic interval $[0,T)$. Then on $\{T>0\}$ it holds almost surely that either $\lim_{t\uparrow T} M_t$ exists in $\mathbb{R}$ or that $\limsup_{t\uparrow T}M_t=-\liminf_{t\uparrow T}M_t=\infty$.
\end{lemma}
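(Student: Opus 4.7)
The plan is to reduce the statement to the classical oscillation dichotomy for real Brownian motion via a Dambis--Dubins--Schwarz (DDS) time change. First I would set $Q:=\lim_{t\uparrow T}[M,M]_t\in[0,\infty]$, which is well-defined by the monotonicity of $[M,M]$ built into the definition of quadratic variation on a stochastic interval given in the excerpt. I would then construct, possibly on an enlargement of the filtered probability space, a standard Brownian motion $W$ such that $M_t-M_0=W_{[M,M]_t}$ for all $t\in[0,T)$. The construction proceeds by applying the ordinary DDS theorem to each member $M^{(n)}$ of the localizing sequence, checking that the Brownian motions so obtained agree on the overlap of their natural domains (on $\{t<T_n\wedge T_m\}$ one has $M^{(n)}=M^{(m)}=M$ and $[M^{(n)},M^{(n)}]=[M^{(m)},M^{(m)}]=[M,M]$, so the corresponding time-changed processes coincide), pasting them into a single continuous process defined up to the random time $Q$, and extending past $Q$ on the event $\{Q<\infty\}$ using an auxiliary independent Brownian motion on an enlarged space.

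With $W$ in hand the dichotomy in the statement matches the dichotomy $\{Q<\infty\}$ versus $\{Q=\infty\}$. On $\{Q<\infty\}$, continuity of Brownian sample paths immediately yields $\lim_{t\uparrow T}M_t=M_0+W_Q$, which is almost surely finite. On $\{Q=\infty\}$, the fact that $[M,M]_t\uparrow\infty$ as $t\uparrow T$ combined with the classical almost sure oscillation of Brownian motion, $\limsup_{s\to\infty}W_s=+\infty$ and $\liminf_{s\to\infty}W_s=-\infty$, translates into $\limsup_{t\uparrow T}M_t=+\infty$ and $\liminf_{t\uparrow T}M_t=-\infty$. The restriction to $\{T>0\}$ is only needed to exclude the trivial situation in which both limits would be interpreted over an empty interval.

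The hard part will be the careful bookkeeping in the construction of $W$: one must verify consistency of the DDS Brownian motions across the localization, and handle the technical nuisance that $Q$ need not be infinite, so that $W$ has to be extended beyond $[0,Q]$ by an independent Brownian motion on a possibly enlarged probability space. Once this extension is available, the remainder of the argument is a direct appeal to well-known sample-path properties of Brownian motion, so no further delicate martingale estimates are required. An alternative, strictly self-contained route would localize $M$ at the exit times $\sigma_n:=\inf\{t\in[0,T):|M_t|\geq n\}$ (set to $T$ if the set is empty), obtain a uniformly bounded martingale on $[0,\infty)$ from each $(M_{t\wedge\sigma_n})$ via the localizing sequence, and then combine martingale convergence on $\{\sup_{t<T}|M_t|<\infty\}$ with a non-negative supermartingale argument to rule out the remaining scenario; however the DDS route is cleaner and more in line with the references \cite{bru_89,bru,norrisrogerswilliams} cited alongside McKean's original argument.
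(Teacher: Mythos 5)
Your proposal is correct and takes essentially the same route as the paper: the authors' one-line proof combines the Dambis--Dubins--Schwarz representation for continuous local martingales on stochastic intervals from \cite[Theorem 3.5]{maisonneuve1977} with the argument of \cite[Chapter V, Proposition 1.8]{revuzyor}, which is exactly the reduction to the Brownian oscillation dichotomy via the time change $M_t-M_0=W_{[M,M]_t}$ that you describe. The only difference is that you sketch the construction of the time-changed Brownian motion (pasting the localized DDS Brownian motions and extending past $Q$ on an enlarged space) by hand, whereas the paper delegates precisely this step to the cited reference.
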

\begin{proof}
Combine \cite[Theorem 3.5]{maisonneuve1977} with analogous arguments to the proof of \cite[Chapter V, Proposition 1.8]{revuzyor}.
\end{proof}

\begin{proposition}[McKean's Argument]\label{lem: MCKean}
Let $Z=(Z_s)_{s\in\mathbb{R}_+}$ be an adapted c\`adl\`ag $\mathbb{R}^+\backslash\{0\}$-valued  stochastic process on a
stochastic interval $[0,\tau_0)$ such
that $Z_0>0$ a.s. and $\tau_0=\inf\{0<s\leq\tau_0\,:\, Z_{s-}=0\}$. Suppose
$h:\mathbb R_+\backslash\{0\}\rightarrow \mathbb R$ is continuous and satisfies the following:
\begin{enumerate}
\item For all $t\in[0,\tau_0)$, we have $h(Z_t)=h(Z_0)+M_t+P_t$, where
\begin{enumerate}
\item $P$ is an adapted c\`adl\`ag  process on $[0,\tau_0)$ such that $\inf_{t\in[0,\tau_0\wedge T)}P_t>-\infty$ a.s.
 for each $T\in \mathbb{R}^+\backslash\{0\}$,
\item $M$ is a continuous local martingale on $[0, \tau_0)$ with $M_0=0$,

\end{enumerate}
\item \label{as prop h} and $\lim_{z\downarrow 0}h(z)=-\infty$.
\end{enumerate}
Then $\tau_0=\infty$ a.s.
\end{proposition}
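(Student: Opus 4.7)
The plan is a proof by contradiction that reduces the statement to the dichotomy for continuous local martingales on stochastic intervals provided by Lemma~\ref{lem:contlocmart}. Suppose $\mathbb{P}(\tau_0<\infty)>0$ and pick $T>0$ with $\mathbb{P}(\tau_0\leq T)>0$. Note first that $Z_0>0$ together with the right-continuity of $Z$ forces $\tau_0>0$ a.s., so the lemma is applicable on a positive-probability event.

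The first key observation is that $Z_{\tau_0-}=0$ on $\{\tau_0<\infty\}$. Indeed, the fixed-point definition $\tau_0=\inf\{0<s\leq\tau_0\colon Z_{s-}=0\}$ excludes any $s<\tau_0$ from this set (such an $s$ would contradict the infimum being $\tau_0$), so the only candidate is $s=\tau_0$ itself; since $\tau_0<\infty$, the set cannot be empty (else the infimum would be $+\infty$), so $\tau_0$ lies in it. Continuity of $h$ on $(0,\infty)$ combined with assumption~\ref{as prop h} then yields $\lim_{t\uparrow\tau_0}h(Z_t)=-\infty$ on $\{\tau_0<\infty\}$.

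Plugging this into the decomposition $h(Z_t)=h(Z_0)+M_t+P_t$ and invoking the lower bound $\inf_{t\in[0,\tau_0\wedge T)}P_t>-\infty$ from hypothesis (a), I rearrange to obtain $\lim_{t\uparrow\tau_0}M_t=-\infty$ on $\{\tau_0\leq T\}$. But this contradicts Lemma~\ref{lem:contlocmart}, which asserts that on $\{\tau_0>0\}$ the continuous local martingale $M$ must either converge to a finite limit or satisfy $\limsup_{t\uparrow\tau_0}M_t=\infty$ and $\liminf_{t\uparrow\tau_0}M_t=-\infty$; divergence to $-\infty$ is incompatible with both alternatives. Hence $\mathbb{P}(\tau_0\leq T)=0$ for every $T>0$, so $\tau_0=\infty$ a.s.

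The substantive ingredient is Lemma~\ref{lem:contlocmart}, essentially a Dambis--Dubins--Schwarz time-change statement lifted to stochastic intervals; once that is in hand, the present proof reduces to bookkeeping. The only step that demands genuine care is justifying $Z_{\tau_0-}=0$ at the upper endpoint, since $Z$ is allowed to have jumps and one cannot appeal to continuity to read off the boundary value; here the self-referential form of the definition of $\tau_0$ rescues the argument, because it simultaneously forbids a boundary approach before $\tau_0$ and forces one exactly at $\tau_0$.
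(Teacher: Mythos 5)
Your proof is correct and follows essentially the same route as the paper's: deduce that $M_t\to-\infty$ as $t\uparrow\tau_0$ on $\{\tau_0<\infty\}$ from the decomposition of $h(Z)$, the lower bound on $P$, and the blow-up of $h$ near $0$, then contradict the dichotomy of Lemma~\ref{lem:contlocmart}. You merely spell out two points the paper leaves implicit (that the self-referential definition of $\tau_0$ forces $Z_{\tau_0-}=0$ on $\{\tau_0<\infty\}$, and the localisation to $\{\tau_0\leq T\}$ so that hypothesis (a) applies), which is a welcome clarification rather than a deviation.
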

Above, $\tau_0=\inf\{0<s\leq\tau_0\,:\, Z_{s-}=0\}$ is not to be understood as the definition of $\tau_0$, but it means that the already defined stopping time $\tau_0$ is also the first hitting time of $Z_{s-}$ at zero. Since $Z$ is only defined up to time $\tau_0$, one cannot take the infimum over $\mathbb{R}^+$.
\begin{proof}
Since $h(Z_t)_{-}=h(Z_{t-})=h(Z_0)+P_{t-}+M_{t-}$ and $P_{t-}$ is a.s. bounded from below on compacts, we have $\tau_0=\inf\{s>0\,:\, M_{s-}=-\infty\}$ and further $\tau_0>0$ due to the right continuity of $Z$. Assume, by contradiction, that $\tau_0<\infty$ on a set $A\in\mathcal{F}$ with $\mathbb{P}(A)>0$. Hence, $\lim_{t\nearrow \tau_0} M_t=-\infty$ on $A$ and this contradicts Lemma \ref{lem:contlocmart}.
\end{proof}

\subsection{Proof of Theorem 3.4}\label{sec: proof}~\\
Before we provide a proof of  Theorem \ref{th: strong solutions general}, we
recall some elementary identities from matrix calculus and provide
some further technical lemmata. For a differentiable function $f:
M_d\rightarrow \mathbb{R}$, we denote by $\nabla f$ the usual
gradient written in coordinates as
$(\frac{\partial f}{\partial x_{ij}})_{ij}$.

\begin{lemma}\label{lem: matrixcalculus} On $S_d^{++}$, we have
\begin{enumerate}
\item \label{matrix calc 1} $\nabla \det(x)=\det(x)(x^{-1})^\top=\det(x)x^{-1}$,
\item \label{matrix calc 2} $\frac{\partial^2}{\partial x_{ij}\partial
x_{kl}}\det(x)=\det(x)[(x^{-1})_{kl}(x^{-1})_{ij}-(x^{-1})_{il}(x^{-1})_{jk}]$.
\end{enumerate}
\end{lemma}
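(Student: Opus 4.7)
The plan is to treat both identities as pure matrix calculus, viewing $\det$ as a smooth function on the open subset $GL_d\subset M_d$ of invertible matrices (so all $d^2$ entries are treated as independent coordinates), and to invoke symmetry of $x$ and $x^{-1}$ only at the very end to pass from the $M_d$-derivatives to the $S_d^{++}$-statement.

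For (i), I would start from the Laplace expansion along row $i$, namely $\det(x)=\sum_{j=1}^d x_{ij}C_{ij}(x)$, where the cofactor $C_{ij}(x)$ is independent of the entries of row $i$. Hence $\partial_{x_{ij}}\det(x)=C_{ij}(x)$, so $\nabla \det(x)$ is the cofactor matrix. The adjugate identity $x\,\mathrm{adj}(x)=\det(x)I_d$ together with $\mathrm{adj}(x)_{ij}=C_{ji}$ gives $C_{ij}=\det(x)(x^{-1})^\top_{ij}$, establishing $\nabla\det(x)=\det(x)(x^{-1})^\top$; on $S_d^{++}$, symmetry of $x$ forces $(x^{-1})^\top=x^{-1}$, yielding the second equality.

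For (ii), I differentiate the formula from (i) once more. The product rule gives
\[
\frac{\partial^2 \det(x)}{\partial x_{kl}\,\partial x_{ij}}=\frac{\partial \det(x)}{\partial x_{kl}}\,(x^{-1})^\top_{ij}+\det(x)\,\frac{\partial (x^{-1})^\top_{ij}}{\partial x_{kl}}.
\]
The first term rewrites, by (i) applied to the $(k,l)$-partial of $\det$, as $\det(x)(x^{-1})^\top_{kl}(x^{-1})^\top_{ij}$. For the second term, differentiating $xx^{-1}=I_d$ with respect to $x_{kl}$ gives $\partial_{x_{kl}}x^{-1}=-x^{-1}E_{kl}x^{-1}$, where $E_{kl}$ is the matrix with a $1$ in position $(k,l)$ and zeros elsewhere; reading off the $(j,i)$-entry yields $\partial_{x_{kl}}(x^{-1})_{ji}=-(x^{-1})_{jk}(x^{-1})_{li}$. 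Summing the two contributions and using $(x^{-1})^\top=x^{-1}$ on $S_d^{++}$ to rewrite the four indices symmetrically then produces exactly $\det(x)[(x^{-1})_{kl}(x^{-1})_{ij}-(x^{-1})_{il}(x^{-1})_{jk}]$.

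The only point requiring any care is the convention for differentiating over the non-open submanifold $S_d\subset M_d$: since $\det$ extends smoothly to $GL_d$ and the formulas involve the partials $\partial/\partial x_{ij}$ with $x_{ij},x_{ji}$ treated as independent coordinates, this is the convention I would adopt throughout, with symmetry of $x^{-1}$ entering only as an algebraic simplification at the end. I expect no genuine obstacle; the proof is a short, routine two-step calculation.
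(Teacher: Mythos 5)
Your proposal is correct and follows essentially the same route as the paper: part (i) via the adjugate/cofactor formula (which the paper simply cites from Magnus--Neudecker rather than rederiving by Laplace expansion), and part (ii) via the product rule together with $\partial_{x_{kl}}x^{-1}=-x^{-1}(\partial_{x_{kl}}x)x^{-1}$, invoking symmetry of $x^{-1}$ only at the end. Your explicit remark about the convention of treating $x_{ij}$ and $x_{ji}$ as independent coordinates matches the convention the paper uses implicitly.
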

\begin{proof}

 The first identity in  (i) can be found in \cite[Section 9.10]{MagnusNeudecker1988} and the second is an immediate consequence of restricting to symmetric matrices. Now (ii) follows using  $\frac{\partial}{\partial x_{kl}} x^{-1} =-
x^{-1}\left(\frac{\partial}{\partial x_{kl}}x\right)x^{-1}$ and finally the symmetry:
\begin{align*}
\frac{\partial}{\partial x_{kl}x_{ij}}\det(x)& =  \frac{\partial}{\partial x_{kl}}\left( \det(x) (x^{-1})_{ji}\right) =
\det(x)\left((x^{-1})_{lk}(x^{-1})_{ji} + \frac{\partial}{\partial x_{kl}}
(x^{-1})_{ji}\right)\\
&= \det(x)\left((x^{-1})_{lk}(x^{-1})_{ji} -(x^{-1})_{jk}(x^{-1})_{li}\right).
\end{align*}

\end{proof}

For a semimartingale $X$ we denote by $X^c$ as usual its continuous
part. All semimartingales in the following will have a discontinuous
part of finite variation, i.e. $\sum_{0<s\leq t}\|\Delta X_s\|$ is
finite for all $t\in\mathbb{R}^+$. Thus we define
$X_t^c=X_t-\sum_{0<s\leq t}\Delta X_s$ and note that the quadratic
variation of a semimartingale is the one of its local continuous
martingale part plus the sum of its squared jumps.

The continuous quadratic variation of $X$ solving \eqref{gensde1} is
only influenced by the Brownian terms and, hence, we have a
general version of \cite[Equation (2.4)]{bru} which is proved just as \cite[Lemma 2]{AhdidaAlfonsi2010}:
\begin{lemma}\label{lem: quadr cov1}
Consider the solution $X_t$ of \eqref{gensde1} on $[0, T_x)$. Then
\begin{align*}
\frac{d[X_{ij}, X_{kl}]^c_t}{dt}&=(F F^\top(t,X_{t-}))_{ik}(G^\top
G(t,X_{t-}))_{jl}+(F F^\top(t,X_{t-}))_{il}(G^\top
G(t,X_{t-}))_{jk}\\&+(F F^\top(t,X_{t-}))_{jk}(G^\top
G(t,X_{t-}))_{il}+(F F^\top(t,X_{t-}))_{jl}(G^\top
G(t,X_{t-}))_{ik}.
\end{align*}
Here $G^\top G(t,x):= G(t,x)^\top G(t,x)$ and $FF^\top(t,x):=F(t,x)F(t,x)^\top$ to ease notation.
\end{lemma}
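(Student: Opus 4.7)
The plan is to reduce the lemma to a direct Itô-calculus computation using only the Brownian part of the SDE, since the drift $H(t,X_{t-})\,dt$ is of finite variation and $K(X_{t-})\,dJ_t$ is of pure jump type by assumption, so neither contributes to the continuous quadratic variation. Hence on $[0,T_x)$ we have $X^c_{ij,t} = X_{ij,0} + M_{ij,t} + \text{(finite variation)}$, where
\[
M_{ij,t} = \int_0^t \bigl[F(s,X_{s-})\,dB_s\,G(s,X_{s-})\bigr]_{ij}
+ \int_0^t \bigl[G(s,X_{s-})^\top dB_s^\top F(s,X_{s-})^\top\bigr]_{ij},
\]
and only $d[M_{ij},M_{kl}]_t$ contributes to $d[X_{ij},X_{kl}]_t^c$.

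Next I would write out the components. With the scalar covariations $d[B_{pq},B_{p'q'}]_t = \delta_{pp'}\delta_{qq'}\,dt$, the first summand of $M_{ij}$ equals $A_{ij} := \sum_{p,q}\int F_{ip}(s,X_{s-})G_{qj}(s,X_{s-})\,dB_{pq,s}$, while the second summand, after carefully transposing indices, becomes $C_{ij} := \sum_{p,q}\int G_{pi}(s,X_{s-})F_{jq}(s,X_{s-})\,dB_{qp,s}$. Then $d[M_{ij},M_{kl}]_t = d[A_{ij},A_{kl}]_t + d[A_{ij},C_{kl}]_t + d[C_{ij},A_{kl}]_t + d[C_{ij},C_{kl}]_t$, and each of the four pairings yields one of the four terms on the right-hand side of the claimed identity. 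For instance, suppressing arguments,
\[
d[A_{ij},A_{kl}]_t = \sum_{p,q}F_{ip}F_{kp}G_{qj}G_{ql}\,dt = (FF^\top)_{ik}(G^\top G)_{jl}\,dt,
\]
and the mixed term $d[A_{ij},C_{kl}]_t$ forces $p'=q$, $q'=p$ in the delta, giving $(FF^\top)_{il}(G^\top G)_{jk}\,dt$; the remaining two terms follow analogously (using symmetry of $FF^\top$ and $G^\top G$ for the $C\text{-}A$ term). Summing gives the stated formula.

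The only genuine obstacle is keeping the index gymnastics straight in the $G^\top dB^\top F^\top$ term — in particular recognising which pair of Brownian indices is transposed — and verifying rigorously that the stochastic integrals with matrix-valued integrands defined in the Preliminaries section admit this entrywise expansion. The former is mechanical but error-prone, and the latter is essentially built into the definition of $\int A_{s-}dL_s B_{s-}$ recalled earlier in the paper, so no new theory is needed. This is exactly the route followed in \cite{AhdidaAlfonsi2010} and generalises \cite[Equation (2.4)]{bru}.
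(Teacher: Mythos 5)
Your proof is correct, and it is essentially the paper's own argument: the paper simply remarks that the lemma is proved just as Lemma 2 of Ahdida--Alfonsi, i.e.\ by exactly the entrywise It\^o computation you carry out, after discarding the finite-variation drift and pure-jump contributions. The index bookkeeping in your four covariation terms checks out and reproduces the stated formula.
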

Moreover, we shall need the following result where a Brownian motion on a stochastic interval  $[0,T)$ is defined as a continuous local martingale on $[0,T)$ with $[\beta,\beta]_t=t$.
\begin{lemma}\label{lem: levychar local}
%Suppose $T>0$. \footnote{This assumption is indeed not necessary, but suffices for the proof of Theorem \ref{th: strong solutions} below}
Let $X_t$ be a continuous $S_d^+$-valued adapted c\`adl\`ag
stochastic process on a stochastic interval $[0,T)$ with $T$ being a
predictable stopping time and let $h:M_d\rightarrow M_d$. Then there
exists a one-dimensional Brownian motion $\beta^h$ on $[0,T)$ such
that
\begin{equation}\label{eq: taking brownian traces}
\tr\left(\int_0^t h(X_{u-})dB_u\right)=\int_0^t
\sqrt{\tr(h(X_{u-})^\top h(X_{u-}))}d\beta^h_u
\end{equation}
holds on $[0,T)$.
\end{lemma}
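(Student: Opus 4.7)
The plan is to view the trace of the matrix It\^o integral as a single scalar continuous local martingale on $[0,T)$, compute its quadratic variation, and then apply the standard L\'evy-characterisation trick of dividing out the local volatility and augmenting with an independent Brownian motion where the volatility vanishes. The whole construction is then transferred to the stochastic interval $[0,T)$ via the announcing sequence $T_n\nearrow T$ supplied by the predictability of $T$.

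First I would write, using linearity of the trace and of the matrix stochastic integral,
\[
N_t \;:=\; \tr\!\left(\int_0^t h(X_{u-})\,dB_u\right) \;=\; \sum_{i,j=1}^{d}\int_0^t h_{ij}(X_{u-})\,dB_{ji,u},
\]
which is a continuous local martingale on $[0,T)$. By the orthogonality of the entries of $B$, namely $d[B_{ji},B_{lk}]_u = \delta_{jl}\delta_{ik}\,du$, its quadratic variation equals
\[
[N,N]_t \;=\; \int_0^t \sum_{i,j} h_{ij}(X_{u-})^2\,du \;=\; \int_0^t \tr\!\bigl(h(X_{u-})^\top h(X_{u-})\bigr)\,du \;=:\; \int_0^t \sigma_u^2\,du.
\]

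Next I would construct $\beta^h$ by the standard recipe of dividing by $\sigma_u$ on $\{\sigma_u>0\}$ and plugging in an auxiliary Brownian motion on $\{\sigma_u=0\}$. Invoking the assumed richness of $(\Omega,\mathcal{F},\mathbb{P},(\mathcal{F}_t))$, I pick a standard one-dimensional Brownian motion $W$ independent of $B$ and $X$, and set
\[
\beta^h_t \;:=\; \int_0^t \mathbf{1}_{\{\sigma_u>0\}}\sigma_u^{-1}\,dN_u \;+\; \int_0^t \mathbf{1}_{\{\sigma_u=0\}}\,dW_u.
\]
Each summand is a continuous local martingale (the first on $[0,T)$, the second globally), their cross-bracket vanishes by independence, and a direct computation gives $[\beta^h,\beta^h]_t = \int_0^t \mathbf{1}_{\{\sigma_u>0\}}\,du + \int_0^t \mathbf{1}_{\{\sigma_u=0\}}\,du = t$. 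By the definition of Brownian motion on $[0,T)$ recalled just before the lemma, $\beta^h$ is therefore a Brownian motion on $[0,T)$. The identity \eqref{eq: taking brownian traces} then follows from
\[
\int_0^t \sigma_u\,d\beta^h_u \;=\; \int_0^t \mathbf{1}_{\{\sigma_u>0\}}\,dN_u \;+\; \int_0^t \sigma_u\mathbf{1}_{\{\sigma_u=0\}}\,dW_u \;=\; N_t,
\]
since $\sigma_u\mathbf{1}_{\{\sigma_u=0\}}\equiv 0$ and the complementary integral $\int_0^t \mathbf{1}_{\{\sigma_u=0\}}\,dN_u$ has vanishing quadratic variation.

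The only non-routine aspect is ensuring that all manipulations are legitimate on the stochastic interval $[0,T)$ rather than on $[0,\infty)$. The predictability of $T$ supplies an announcing sequence $T_n\nearrow T$ with $T_n<T$, on each of which $N^{T_n}$ is a standard continuous local martingale and $\beta^{h,T_n}$ is a classical Brownian motion stopped at $T_n$; the limiting identities on $[0,T)$ then follow from consistency under localisation. I do not foresee any substantive obstacle beyond this bookkeeping.
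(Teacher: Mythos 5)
Your proof is correct and follows essentially the same route as the paper: both write the trace as the scalar continuous local martingale $\sum_{i,j}\int_0^t h_{ij}(X_{u-})\,dB_{ji,u}$, identify its bracket as $\int_0^t\tr\bigl(h(X_{u-})^\top h(X_{u-})\bigr)\,du$, and obtain $\beta^h$ by dividing out $\sigma_u=\sqrt{\tr(h(X_{u-})^\top h(X_{u-}))}$, with the passage to the stochastic interval handled by stopping along a sequence announcing $T$. The only difference is in the treatment of the set $\{\sigma_u=0\}$: the paper keeps integrating against $B$ itself using a $0/0$ convention for the normalised integrand, whereas you splice in an independent auxiliary Brownian motion there, which is legitimate under the paper's standing assumption that the filtered probability space is rich enough.
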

\begin{proof}
We define for $t\in [0,T)$,
\[
\beta_t^h:=\sum_{i,j=1}^d\int_0^t
\frac{h(X_{u-})_{ij}}{\sqrt{\tr(h(X_{u-})^\top
h(X_{u-}))}}dB_{u,ji},
\]
and since the numerator equals zero, whenever the denominator
vanishes, we use the convention $\frac{0}{0}=1$. Clearly for each
$i,j$ and for all $u\in [0,T)$ we have
\[
 \left| \frac{h(X_{u-})_{,ij}}{\sqrt{\tr(h(X_{u-})^\top h(X_{u-}))}} \right|\leq 1
\]
which ensures that $\beta^h$ is well-defined, square-integrable and
a continuous local martingale on $[0,T)$ by stopping at a sequence
of stopping times announcing $T$.
 Furthermore, by construction
\[
[\beta^h,\beta^h]_t=\sum_{i,j=1}^d\int_0^t
\frac{h(X_{u-})_{ij}^2}{\tr(h(X_{u-})^\top h(X_{u-}))}du=t
\]
and therefore  $\beta^h$ is a Brownian motion on $[0,T)$.

Finally by the very definition of $\beta^h$, we have
\[
\tr(h(X_{t-})dB_t)=\sum_{i,j=1}^d
h(X_{t-})_{ij}dB_{t,ji}=\sqrt{\tr(h(X_{t-})^\top
h(X_{t-}))}d\beta^h_t,
\]
which proves identity \eqref{eq: taking brownian traces}.
\end{proof}
Finally, we state a variant of It\^o's formula which we later
employ. It follows easily from the usual versions like \cite[Theorem
3.9.1]{Bichteler2002} by arguments similar to \cite[Theorem
5.4]{maisonneuve1977} and \cite[Proposition 3.4]{barndorffstelzer}.
\begin{lemma}\label{eq: Itooform}
Let $X$ be an $S_d^{++}$-valued semimartingale on a stochastic
interval $[0,T)$ and $f:S_d^{++}\to \mathbb{R}$ a twice continuously
differentiable function. If $X_{t-}\in S_d^{++}$ for all $t\in[0,T)$
and $\sum_{0<s\leq t}\|\Delta X_s\|<\infty$ for $t\in[0,T)$, then
$f(X)$ is a semimartingale on $[0,T)$ and
\begin{align*}
f(X_t)=&f(X_0)+\tr\left(\int_0^t \nabla
f(X_{s-})^\top dX_s^c\right)+\frac{1}{2}\sum_{i,j,k,l=1}^d\int_0^t\frac{\partial^2}{\partial
x_{ij}\partial x_{kl}}f(X_{s-})d[X_{ij},X_{kl}]_s^c\\&+\sum_{0<s\leq
t}\left(f(X_s)-f(X_{s-})\right).
\end{align*}
\end{lemma}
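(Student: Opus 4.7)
The strategy is the standard two-step localisation used to promote the classical It\^o formula to a formula on a stochastic interval for a function defined only on the open cone $S_d^{++}$.

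First I would introduce an announcing sequence of stopping times. Let
\[
\sigma_n:=\inf\{t\geq 0\,:\, d(X_t,\partial S_d^+)\leq 1/n \text{ or }\|X_t\|\geq n\}\wedge n,
\]
each of which is a stopping time by the c\`adl\`ag property. The hypothesis that both $X_t$ and $X_{t-}$ lie in $S_d^{++}$ for every $t\in[0,T)$, combined with the fact that c\`adl\`ag trajectories are locally bounded and locally bounded away from any closed set they avoid, forces $\sigma_n\uparrow T$ almost surely. On $[0,\sigma_n]$ both $X$ and $X_-$ stay in the compact set $K_n:=\{x\in S_d: d(x,\partial S_d^+)\geq 1/n,\,\|x\|\leq n\}\subset S_d^{++}$, modulo the isolated jump at the stopping instant itself; this last caveat is handled by shrinking to $\sigma_n\wedge\inf\{t:X_t\notin \tilde K_n\}$ where $\tilde K_n$ is a slightly larger compact set containing $K_n$ in its interior.

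Second, I would extend $f$ to a globally defined $C^2$ function. Pick a smooth cutoff $\chi_n$ equal to $1$ on $K_n$ and supported in $\tilde K_n$, and set $f_n:=\chi_n f$ (extended by $0$ off $\tilde K_n$). Then $f_n\in C^2(S_d,\mathbb{R})$ and $f_n$ together with its first and second derivatives agrees with those of $f$ on $K_n$. By the definition of a semimartingale on $[0,T)$ recalled at the start of Section~4, the stopped process $X^{\sigma_n}$ is an ordinary semimartingale on $[0,\infty)$ (one localises the continuous local martingale and finite-variation parts separately, then takes a further minimum). Applying the classical It\^o formula, e.g.\ \cite[Theorem~3.9.1]{Bichteler2002}, to $f_n(X^{\sigma_n})$ yields exactly the stated identity for $t\leq\sigma_n$, because $f_n$ coincides with $f$ along the trajectory up to second order.

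Third, I would pass to the limit $n\to\infty$. The three integrals and the jump sum are manifestly consistent across $n$ on $\{t\leq\sigma_n\}$, so each extends to a well-defined process on $[0,T)$. Absolute convergence of the jump sum on $[0,\sigma_n]$ follows from the mean value theorem, since $|f(X_s)-f(X_{s-})|\leq (\sup_{\tilde K_n}\|\nabla f\|)\|\Delta X_s\|$ and the hypothesis $\sum_{0<s\leq t}\|\Delta X_s\|<\infty$. The main obstacle, and the only reason $\sigma_n$ must be modified as above, is controlling the jump at time $\sigma_n$: a single jump of $X$ could in principle carry $X_{\sigma_n}$ far outside $K_n$, spoiling the agreement $f_n=f$ at that instant. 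The assumption $X_{t-}\in S_d^{++}$ for all $t\in[0,T)$ is precisely what prevents any such jump from leaving \emph{every} enlarged compact $\tilde K_n$ simultaneously, so the refined stopping times still satisfy $\sigma_n\uparrow T$ and the passage to the limit goes through.
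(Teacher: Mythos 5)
Your proposal is correct and follows essentially the same route as the paper, which simply invokes the classical It\^o formula of \cite[Theorem 3.9.1]{Bichteler2002} together with the localisation arguments of \cite[Theorem 5.4]{maisonneuve1977} and \cite[Proposition 3.4]{barndorffstelzer}; your announcing stopping times, compact exhaustion of $S_d^{++}$ with a $C^2$ cutoff extension of $f$, and passage to the limit are precisely the details behind that citation. The only point worth making explicit is the rearrangement of the standard formula (integral against $dX$ with compensated jump sum) into the stated form with $dX^c$ and the uncompensated jump sum, which is exactly where the hypothesis $\sum_{0<s\leq t}\|\Delta X_s\|<\infty$ enters, as you indicate via the mean value theorem bound.
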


We are now prepared to provide a proof of Theorem \ref{th: strong
solutions general}. Note that to shorten our formulae we use in the
following differential notation and not integral notation as above.

\begin{proof}[Proof of Theorem \ref{th: strong solutions general}] Since \[\operatorname{vec}\left(F(t,X_{t-})dB_t G(t,X_{t-})\right)=\left(G(t,X_{t-})^\top\otimes(F(t,X_{t-})\right)\operatorname{vec}(dB_t),\] it is easy to see that all coefficients of \eqref{gensde1}
are locally Lipschitz and of linear growth. Hence,
standard SDE theory implies again the existence of a unique
c\`adl\`ag adapted non-explosive local strong solution until the first
time $T_x=\inf\{t\geq 0\,:\, X_{t-}\in\partial S_d^+\mbox { or }X_t\not\in S_d^{++}\}$
when $X$ hits the boundary or jumps out of $S_d^{++}$ .  Hence, we  have to show $T_x=\infty$.

By the choice of $K$ and $J$, all jumps have to be positive
semidefinite and hence the solution $X$ cannot jump out of
$S_d^{++}$. This implies that $T_x=\inf\{t\geq 0\,:\,
X_{t-}\in\partial S_d^+\}$.

In the following, all statements are meant to hold on the stochastic
interval $[0,T_x)$. Note that by the right continuity of $X_t$, a.s.
$T_x>0$. Moreover, we set $T_n=\inf\{t\in\mathbb{R}_+\,:\, d(X_t,
\partial S_d^+)\leq 1/n\,\, \textit{or}\,\,\|X_t\|\geq n\}.$ Then
$(T_n)_{n\in\mathbb{N}}$ is an increasing sequence of stopping times
with $\lim_{n\to\infty}T_n=T_x$, hence $T_x$ is predictable.

 We define the following
processes and functions according to the notation of Proposition
\ref{lem: MCKean}:
\begin{equation}\label{eq: trick1}
Z_t:=\det(X_t),\quad h(z):=\ln(z),\quad r_t:=h(Z_t).
\end{equation}
Then $T_x=\inf\{t>0\,:\, r_{t-}=-\infty\}$.

By  Lemma \ref{lem: matrixcalculus} \ref{matrix calc 1} and using
the abbreviation $f=FF^\top$, $g=G^\top G$, we obtain
\begin{align*}
\tr( \nabla(\det(X_{t-}))dX^c_t)=&\det(X_{t-})\biggr[2\sqrt{\tr\left(
f(t,X_{t-})X_{t-}^{-1}g(t,X_{t-}) X^{-1}_{t-}
\right)}dW_t\\&+\tr\left(H(t,X_{t-})X_{t-}^{-1}\right)dt\biggr],
\end{align*}
with some  one-dimensional Brownian motion $W$ on $[0, T_x)$, whose
existence is guaranteed by Lemma \ref{lem: levychar local}.
Furthermore, by Lemma \ref{lem: matrixcalculus} \ref{matrix calc 2},
Lemma \ref{lem: quadr cov1}  and elementary calculations we have
that
\begin{align*}
&\frac{1}{2}\sum_{i,j,k,l}\frac{\partial^2}{\partial x_{{ij}}\partial
x_{{kl}}}\det( X_{t-}) d[X_{ij},X_{kl}]^c_t\\&\quad=\frac{\det(X_{t-})}{2}\sum_{i,j,k,l}\biggl[\bigl((X_{t-}^{-1})_{kl}(X_{t-}^{-1})_{ij}-(X_{t-}^{-1})_{il}(X_{t-}^{-1})_{jk}\bigr)\bigl(f(t,X_{t-})_{ik}g(t,X_{t-})_{jl}\\&\quad\quad+f(t,X_{t-})_{il}g(t,X_{t-})_{jk}+f(t,X_{t-})_{jk}g(t,X_{t-})_{il}+f(t,X_{t-})_{jl}g(t,X_{t-})_{ik}\bigr)\biggr]
\\&\quad=\det
(X_{t-})\left(\tr(f(t,X_{t-})X_{t-}^{-1}g(t,X_{t-}) X^{-1}_{t-})-\tr(f(t,X_{t-})X_{t-}^{-1})\tr(g(t,X_{t-}) X^{-1}_{t-})\right)dt.
\end{align*}
 According
to It\^o's formula, Lemma \ref{eq: Itooform}, we therefore obtain by
summing up the two equations,
\begin{align*}
dZ_t=&2\det(X_{t-})\sqrt{\tr(f(t,X_{t-})X_{t-}^{-1}g(t,X_{t-}) X^{-1}_{t-})}dW_t+\det(X_t)-\det(X_{t-})\\&+ \det(X_{t-})\biggl[
\tr(H(t,X_{t-})X_{t-}^{-1})+\tr(f(t,X_{t-})X_{t-}^{-1}g(t,X_{t-}) X^{-1}_{t-})\\&-\tr(f(t,X_{t-})X_{t-}^{-1})\tr(g(t,X_{t-}) X^{-1}_{t-})\biggr]dt.
\end{align*}
Using again It\^o's formula, we have
\begin{align*}
dr_t=&2\sqrt{\tr(f(t,X_{t-})X_{t-}^{-1}g(t,X_{t-}) X^{-1}_{t-})}dW_t+\ln(\det(X_t))-\ln(\det(X_{t-}))\\&+ \biggl[
\tr(H(t,X_{t-})X_{t-}^{-1})-\tr(f(t,X_{t-})X_{t-}^{-1}g(t,X_{t-}) X^{-1}_{t-})\\&-\tr(f(t,X_{t-})X_{t-}^{-1})\tr(g(t,X_{t-}) X^{-1}_{t-})\biggr]dt.
\end{align*}
Hence, we have $r_t=r_0+M_t+P_t$, where
\begin{align*}
M_t=&2\int_0^t\sqrt{\tr(f(s,X_{s-})X_{s-}^{-1}g(s,X_{s-}) X^{-1}_{s-})}dW_s,\\
P_t=&\int_0^t\biggl[
\tr(H(s,X_{s-})X_{s-}^{-1})-\tr(f(s,X_{s-})X_{s-}^{-1}g(s,X_{s-}) X^{-1}_{s-})\\&-\tr(f(s,X_{s-})X_{s-}^{-1})\tr(g(s,X_{s-}) X^{-1}_{s-})\biggr]ds +\sum_{0<s\leq t}\left(\ln(\det(X_s))-\ln(\det(X_{s-}))\right).
\end{align*}
 We infer that $(M^{(n)}_t)_{t\geq
0}$ defined by $$M^{(n)}_t:=2\int_0^t\sqrt{\tr(f(s,X^{T_n}_{s-})(X^{T_n}_{s-})^{-1}g(s,X^{T_n}_{s-}) (X^{T_n}_{s-})^{-1})}dW_s$$ is a continuous martingale. Obviously,
$M_t=M^{(n)}_t$ on $\{t<T_n\}$ and thus $M$ is a continuous local
martingale on $[0,T_x)$.  Furthermore, $X_s-X_{s-}\succeq 0$
for all $s\in [0,T)$ and hence $\det(X_s)\geq \det(X_{s-}) $ using
\cite[Corollary 4.3.3]{Hornetal1990}. Therefore, we have that $P_t\geq \int_0^tc(s)ds$ on $[0,T_x)$.

Finally, by Proposition \ref{lem: MCKean} we have that $T_x=\infty$
a.s. noting that $c$ is assumed to be locally integrable.
\end{proof}
\begin{remark}\label{remark on bru}
Bru's method for proving her proposition \ref{th: bru} for Wishart diffusions   consists of the following two steps{:}
\begin{enumerate}
\item First assume $\beta=0$. By applying the original McKean's argument twice, one
derives that $h(\det(X))$ is a local martingale. This is proved separately for $\delta=d+1$ and $\delta>d+1$  by choosing $h(z)=\ln(z)$ in the first case and $h(z)=z^{d+1-\delta}$ in the second one. Therefore, the existence of a unique global strong solution on $S_d^{++}$ is settled.
\item \label{step2} One may therefore suppose that $X_t$ is an $S_d^{++}$-valued solution on $[0,\infty)$ of
\[
dX_t=\sqrt{X_t}dB_tQ+Q^\top dB_t^{\top}\sqrt{X_t}+\delta Q^\top Q dt,\quad X_0=x\in S_d^{++}.
\]
where $Q\in GL(d)$ and $\delta\geq d+1$. Now, Girsanov's Theorem is applied which allows to introduce a drift
by changing to an equivalent probability measure. This step generalises a one-dimensional method by Pitman and Yor, see
\cite[p. 748]{bru}. The involved arguments and calculations, which are not presented in detail in \cite{bru}, appear rather complicated and work seemingly  only in the special case given in Proposition \ref{th: bru} {\ref{stat: bru1}}, \ref{stat: bru2}.
\end{enumerate}
The technical details of \cite{bru} concerning strong solutions are explained in more detail in \cite{Pfaffl}.
\end{remark}
Our proof above circumvented the problems associated to the use of Girsanov's theorem by extending the approach outlined in (i).
\section{Conclusion}\label{sec: final}
In this paper we have extended the previously known  sufficient
boundary non-attain\-ment conditions for certain Wishart processes to  more general SDEs on $S_d^{++}$, which include  affine diffusions
with state-independent jumps of finite variation. This allowed to infer the existence of strong
solutions of a large class of affine matrix valued processes. Moreover, we have thus obtained strong existence results for SDEs which can be considered as positive semidefinite extensions of GARCH diffusions and generalised Cox-Ingersoll-Ross processes.

However, this results in several open questions related to our SDE \eqref{eq: affine process} which will hopefully  be addressed in future work.  The following questions are beyond the scope of the present paper,
since they are obviously rather non-trivial and apparently need very different techniques than the ones employed here.
For $d>1$ and the Wishart diffusions it is not clear, whether the condition
$b\succeq(d+1)Q^\top Q$ for the drift is a necessary non-attainability
condition or not. Only in the case $\beta=0,\Gamma=0, Q=I_d$ and $b=\delta I_d$ with $\delta\in(d-1,d+1)$ it is known from \cite[Theorem 1.4]{donatimartin} that the boundary is hit. On the other hand, one  knows that in the case $d=1$
pathwise uniqueness holds, hence there exists a strong solution for all
$b\succeq 0$ (even in the general setting of CBI processes, see \cite[Theorem 5.1]{dawsonli}).  For $d\geq 2$, the situation seems in general to be rather complicated and therefore existence of global strong solutions remains an open problem  when $b\nsucceq (d+1)Q^\top Q$ (and the conditions for the existence of weak solutions of \cite{CFMT} are satisfied). Likewise, it is a very interesting problem in the case of the GCIR processes with $\alpha>1/2$ whether a state dependent drift away from the boundary is really necessary and what happens if one has only a constant drift towards the interior of $S_d^+$.

Finally, we remark that our method of proof could be generalised to state-spaces $D$ other than $S_d^+$,
as long as the existence of an appropriate function $g: D\rightarrow \mathbb R_+$ is guaranteed,
such that $g^{-1}(0)=\partial D$. For instance, similar (but simpler) arguments to the ones of the proof of Theorem \ref{th: strong solutions general} yield a rigorous proof of the non-attainment condition formulated in \cite[Section 6]{cheridito}
for affine jump diffusions on the canonical state space $\mathbb R_+^m\times \mathbb R^n$. Here one takes $g(x_1,x_2,\ldots,x_m)=x_1\cdot x_2\cdot\dots\cdot x_m$.

\bibliographystyle{abbrvnat}

\end{document}